\theoremstyle{plain}
\newtheorem{theorem}{Theorem}
\numberwithin{theorem}{section}
\newtheorem{corollary}[theorem]{Corollary}
\newtheorem{proposition}[theorem]{Proposition}
\theoremstyle{definition}
\newtheorem{definition}[theorem]{Definition}
\newtheorem{example}{Example}
\newtheorem*{theorem*}{Theorem}
\theoremstyle{remark}
\newtheorem*{remark}{Remark}
\begin{document}

\title{Partition-theoretic Frobenius-type limit formulas}

\author{Robert Schneider}

\address{Department of Mathematical Sciences \newline
Michigan Technological University\newline
Houghton, Michigan 49931, U.S.A.}
\email{robertsc@mtu.edu}
\begin{abstract} 



Using partition generating function techniques, we prove $q$-series analogues of a formula of Frobenius generalizing Abel's convergence theorem for complex power series. Frobenius' result  states that for $|q|<1$, $\lim_{q\to 1}(1-q)\sum_{n\geq 1} f(n) q^n $ is equal to the average value $\lim_{N\to \infty}$ $\frac{1}{N}\sum_{k=1}^{N}f(k)$ of the sequence $\{f(n)\}$ as $n\to \infty$, if the average value exists.

\end{abstract}
\maketitle

\section{Introduction and statement of results}\label{Sect1}
%
%
%
%
In \cite{Abel}, Abel proves a foundational theorem on the convergence of complex power series.

\begin{proposition}[Abel's Convergence Theorem]\label{Abel}%
Let $f\colon \mathbb N \to \mathbb C$ be an arithmetic function. For $q\in \mathbb C, |q|<1$, if the limit 
$L=\lim_{N\to \infty}\sum_{1\leq k \leq N}f(k)$ exists, then 
\begin{equation*}
\lim_{q\to 1}\  \sum_{n\geq 1}f(n)q^n\   =\  L
\end{equation*}
as $q\to 1$ radially from within the unit disk.
\end{proposition}

Another ``Abel type'' theorem giving limiting values  as $q\to 1$ for certain classes of complex power series, is proved by Frobenius in \cite{Frobenius}.\footnote{Prop. \ref{Frob} is an equivalent statement   to the second equation of \cite{Tauberian}, replacing $a_n$ by $f(n)$, and $A$ by $f_{\operatorname{avg}}$; the condition that $f_{\operatorname{avg}}$ exists is equivalent to the Tauberian condition $\sum_{k\leq n}a_k\sim An$.}  
%
\begin{proposition}[Frobenius' Theorem]\label{Frob}%
Let $f\colon \mathbb N \to \mathbb C$ be an arithmetic function. For $q\in \mathbb C, |q|<1$, if the average value 
$f_{\operatorname{avg}}:=\lim_{N\to \infty}$ $\frac{1}{N}\sum_{k=1}^{N}f(k)$ exists, then 
\begin{equation*}
\lim_{q\to 1}\  (1-q)\sum_{n\geq 1}f(n)q^n\   =\  f_{\operatorname{avg}}
\end{equation*}
as $q\to 1$ radially from within the unit disk.
\end{proposition}

In this paper, we prove theorems analogous to Proposition \ref{Frob} using methods from $q$-series, that we will refer to as  {\it Frobenius-type limit formulas}. We note that the limit in Proposition \ref{Frob} holds if $q\to 1$ through any path in a {\it Stolz sector} of the unit disk, a region with vertex at $z=1$ such that $\frac{|1-q|}{1-|q|}\leq M$ for some $M> 0$ (see \cite{Stolz}).  

Let $\mathcal P$ denote the {\it integer partitions} \cite{And}. For $\lambda \in \mathcal P$, let $|\lambda|$ denote the {\it size} of $\lambda$ (sum of parts), $\ell(\lambda)$ denote the {\it length} (number of parts), and let $\operatorname{sm}(\lambda)$ and $\operatorname{lg}(\lambda)$ denote the {\it smallest part }and {\it largest part} of $\lambda$, respectively, noting $|\emptyset|=\ell(\emptyset)= \operatorname{sm}(\emptyset)= \operatorname{lg}(\emptyset):=0$ for $\lambda=\emptyset$ the empty partition. 
For $z,q\in \mathbb C, |zq|<1$, let $(z;q)_n:=\prod_{0\leq k <n}(1-zq^k)$ denote the {\it $q$-Pochhammer symbol}, with $(z;q)_{\infty}:=\lim_{n\to \infty} (z;q)_{n}$. Let $p(n)=\sum_{|\lambda|=n}1$ denote the {\it partition function} (number of partitions of size $n\geq 0$), with the initial value $p(0):=1$.

Note that if $f(n)$ is the indicator function for a subset $S\subseteq \mathbb N$ with arithmetic density $d_S$, then Proposition \ref{Frob} gives the limiting value $f_{\operatorname{avg}}=d_S$ as $q\to 1$. Inspired by work of Alladi \cite{A}, in  \cite{Paper1, Paper2, SS_density}, 
the author and my collaborators 
exploited this idea to prove partition-theoretic and $q$-series formulas for $d_S$ with $q\to 1$, as well as at other roots of unity $\zeta$. 
The present note is a complement to the papers \cite{Paper1, Paper2}; we give a general setting in which such partition-theoretic density computations arise naturally. 
{\it Throughout this paper, 
we take $q\to 1$ in a Stolz sector of the unit disk.} 

%
%

It is not hard to write down partition-theoretic analogues of Proposition \ref{Frob}. Noting that $f(n)q^n  =\  \frac{f(n)}{p(n)}q^n\cdot \sum_{|\lambda|= n}1\   =\  \sum_{|\lambda|= n} \frac{f(|\lambda|)}{p(|\lambda|)}q^{|\lambda|},$ then Proposition \ref{Frob} can be rewritten as a sum over partitions:
\begin{equation}\label{Frob_P}
\lim_{q\to 1}\  (1-q)\sum_{\lambda \in \mathcal P} \frac{f\left(|\lambda|\right)}{p\left(|\lambda|\right)}q^{|\lambda|}\  \   =\  \  f_{\operatorname{avg}}.
\end{equation}
This resembles Proposition \ref{Frob} somewhat in form, but writing down the coefficients explicitly requires one to repeatedly compute the partition function, a nontrivial task.  
Alternatively, replacing $f(n)$ in Proposition \ref{Frob} with $(f\cdot p)(n):=f(n)p(n)$ gives by the same argument
\begin{equation}\label{Frob_P2}
\lim_{q\to 1}\  (1-q)\sum_{\lambda \in \mathcal P}f\left(|\lambda|\right) q^{|\lambda|}\  \   =\  \  {(f\cdot p)}_{\operatorname{avg}},
\end{equation}
if  ${(f\cdot p)}_{\operatorname{avg}}=\lim_{N\to \infty}\frac{1}{N}\sum_{k=1}^{N}f(k)p(k)$ exists. However, neither \eqref{Frob_P} nor \eqref{Frob_P2} strongly resembles Proposition \ref{Frob}, in that the limits  on the right-hand sides do not equal the average values of the coefficients $f(|\lambda|)$ on the left, but versions weighted by $p(n)$.  

Below we prove a number of Frobenius-type limit formulas that represent more faithful analogues of Proposition \ref{Frob}. The proofs of these formulas hold for arithmetic functions $f(n)$ that we will refer to as having the property of ``$q$-summability''.\footnote{We do not prove general $q$-summability theorems here. The property must be checked for a given $f(n)$; general proofs of $q$-summability would be useful. We note here, as remarks, examples from previous works \cite{Paper1, Paper2, SchneiderPhD} proved by less general methods, as demonstrations that our general limit theorems are not vacuous.} 

\begin{definition}\label{def}
Suppose for arithmetic function $f\colon \mathbb N \to \mathbb C$ that the limit $f_{\operatorname{avg}}:=\lim_{N\to \infty}$ $\frac{1}{N}\sum_{k=1}^{N}f(k)$ exists. For $|q|<1$, write
\begin{equation}\label{defeq} (1-q)\sum_{n\geq 1}f(n)q^n\   =\  f_{\operatorname{avg}}q\  +\  \varepsilon_f(q)q, \end{equation}
noting by Proposition \ref{Frob} that as $q\to 1$, the {error function} $\varepsilon_f(q)\to 0$. 

We define $f(n)$ to be a {\it $q$-summable function of type} (Q, 1) if $\sum_{k\geq 1}f(k)q^k(q;q)_k^{-1}$ is absolutely convergent, and  the following condition  holds: 
\begin{equation}\label{Q1def} \lim_{q\to 1}\sum_{k\geq 1}\frac{\varepsilon_f(q^k)q^{k}}{(q;q)_{k}}=0.\end{equation}
We define $f(n)$ to be a {\it $q$-summable  function of type} (Q, 2) if $\sum_{k\geq 1}f(k)q^k(q;q)_{k-1}$ is absolutely convergent, and the following condition holds:
\begin{equation}\label{Q2def} \lim_{q\to 1}\sum_{k\geq 1}\frac{(-1)^{k+1}\  \varepsilon_f(q^k)q^{\frac{k(k+1)}{2}}}{(q;q)_{k}}=0.\end{equation}
 \end{definition}

%
%

\begin{remark}
The property of $q$-summability generalizes the idea of $q$-commensurate subsets of $\mathbb N$ in \cite{Paper2}: $S\subseteq \mathbb N$ is $q$-commensurate if and only if the indicator function of $S$ is $q$-summable.
\end{remark}

\begin{remark}
We loosely imitate the notations for summation methods (C, 1), (C, 2), etc. in \cite{Divergent}. 
\end{remark}

\begin{theorem}\label{thm2}
For $f(n)$ a $q$-summable arithmetic function of type (Q, 1), if the limit $f_{\operatorname{avg}}=\lim_{N\to \infty}\frac{1}{N}\sum_{k=1}^{N} f(k)$ exists, then 
\begin{flalign*}
 \lim_{q\to 1}\  (q;q)_{\infty}\sum_{\lambda \in \mathcal P} f\left(\operatorname{sm}(\lambda)\right)q^{|\lambda|}\  
=\  f_{\operatorname{avg}}, 
\end{flalign*}
where the sum is taken over all partitions, and $\operatorname{sm}(\lambda)$ denotes the smallest part of $\lambda\in \mathcal P$. \end{theorem}

%
%
%
%
%
%

We prove this theorem and all other results in Section \ref{Sect2} below.
Theorem \ref{thm2} is a true partition-theoretic analogue of Frobenius' formula in Proposition \ref{Frob}. 

Partition generating function methods lead to further formulas to compute the limit $f_{\operatorname{avg}}$. 
We require the {partition-theoretic M\"{o}bius function} $\mu\colon \mathcal P \to \{-1, 0, 1\}$ defined in \cite{Schneider_arithmetic}: 
\begin{equation}\label{mudef}
\mu_{\mathcal{P}}(\lambda) := \begin{cases} 0 & \rm{if} \ \lambda \ \rm{has \ any\  part\  repeated}, \\
(-1)^{\ell(\lambda)} & \rm{otherwise}. \end{cases}  
\end{equation}

\begin{corollary}\label{cor2}
For $f(n)$ a $q$-summable arithmetic function of type (Q, 1), if the limit $f_{\operatorname{avg}}=\lim_{N\to \infty}\frac{1}{N}\sum_{k=1}^{N} f(k)$ exists, then 
\begin{flalign*}
-\lim_{q\to 1}\   \sum_{\lambda \in \mathcal P}\mu_{\mathcal P}(\lambda)f\left(\operatorname{lg}(\lambda)\right)q^{|\lambda|} \  =\  f_{\operatorname{avg}}. 
\end{flalign*}
\end{corollary}

\begin{corollary}\label{cor2.5}
For $f(n)$ a $q$-summable arithmetic function of type (Q, 1), if the limit $f_{\operatorname{avg}}=\lim_{N\to \infty}\frac{1}{N}\sum_{k=1}^{N} f(k)$ exists, then 
\begin{flalign*}
\lim_{q\to 1}\ \sum_{n\geq 1}f(n)q^n(q;q)_{n-1}\  = \  \lim_{q\to 1}\ \sum_{n\geq 1}\sum_{k\geq 1}\frac{f(n)q^{nk}}{(q;q)_{k-1}}\  =\  f_{\operatorname{avg}}. 
\end{flalign*}
\end{corollary}

\begin{remark}
Setting $f(n)$ equal to the indicator function for $S\subseteq \mathbb N$, then that the first limit in Corollary \ref{cor2.5} is equal to $f_{\operatorname{avg}}=d_{S}$, re-proves Theorem 3.6 of \cite{Paper2}.
\end{remark}

Somewhat surprisingly, if one replaces ``$\operatorname{sm}$'' with ``$\operatorname{lg}$'' in Theorem \ref{thm2}, the limit still holds.

\begin{theorem}\label{thm1}
For $f(n)$ a $q$-summable arithmetic function of type (Q, 2), if the limit $f_{\operatorname{avg}}=\lim_{N\to \infty}\frac{1}{N}\sum_{k=1}^{N} f(k)$ exists, then \begin{flalign*}
 \lim_{q\to 1}\  (q;q)_{\infty}\sum_{\lambda \in \mathcal P} f\left(\operatorname{lg}(\lambda)\right)q^{|\lambda|}\  
=\  f_{\operatorname{avg}}, 
\end{flalign*}
where the sum is taken over all partitions, and $\operatorname{lg}(\lambda)$ denotes the largest part of  $\lambda\in \mathcal P$. \end{theorem}

%
%
%
%
%
%
%

Theorem \ref{thm1} is a second partition analogue of Proposition \ref{Frob}. As with Theorem \ref{thm2}, generating function methods yield further formulas to compute $f_{\operatorname{avg}}$.

\begin{corollary}\label{cor1}
For $f(n)$ a $q$-summable arithmetic function of type (Q, 2), if the limit $f_{\operatorname{avg}}=\lim_{N\to \infty}\frac{1}{N}\sum_{k=1}^{N} f(k)$ exists, then 
\begin{flalign*}
 -\lim_{q\to 1}\   \sum_{\lambda \in \mathcal P}\mu_{\mathcal P}(\lambda)f\left(\operatorname{sm}(\lambda)\right)q^{|\lambda|}\   =\  f_{\operatorname{avg}}. 
\end{flalign*}
\end{corollary}

\begin{corollary}\label{cor1.5}
For $f(n)$ a $q$-summable arithmetic function of type (Q, 2), if the limit $f_{\operatorname{avg}}=\lim_{N\to \infty}\frac{1}{N}\sum_{k=1}^{N} f(k)$ exists, then 
\begin{flalign*}
\lim_{q\to 1}\ (q;q)_{\infty}\sum_{n\geq 1}\frac{f(n)q^n}{(q;q)_{n}}\  =\  -\lim_{q\to 1}\  \sum_{n\geq 1}\sum_{k\geq 1}\frac{(-1)^k f(n) q^{nk+\frac{k(k-1)}{2}}}{(q;q)_{k-1}}\  =\  f_{\operatorname{avg}}. 
\end{flalign*}
\end{corollary}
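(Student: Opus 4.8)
The plan is to derive Corollary~\ref{cor1.5} from Theorem~\ref{thm1} by specializing the partition sum $\sum_{\lambda\in\mathcal P}f(\operatorname{lg}(\lambda))q^{|\lambda|}$ to a one-variable $q$-series and then re-expanding in two different ways. First I would organize the partition sum according to the value of the largest part: partitions $\lambda$ with $\operatorname{lg}(\lambda)=n$ are in bijection with pairs consisting of at least one part equal to $n$ together with an arbitrary partition into parts $\le n$, so $\sum_{\operatorname{lg}(\lambda)=n}q^{|\lambda|}=\frac{q^n}{(q;q)_n}$. Summing over $n\ge 1$ (the $n=0$ term contributes $f(0)=0$) gives
\begin{equation*}
(q;q)_\infty\sum_{\lambda\in\mathcal P}f(\operatorname{lg}(\lambda))q^{|\lambda|}=(q;q)_\infty\sum_{n\ge 1}\frac{f(n)q^n}{(q;q)_n},
\end{equation*}
and Theorem~\ref{thm1} immediately yields that this tends to $L$. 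That establishes the first equality with the limit $L$.

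Next I would obtain the middle expression. The idea is to cancel the $(q;q)_\infty$ against $(q;q)_n$ in the denominator, writing $(q;q)_\infty/(q;q)_n=(q^{n+1};q)_\infty$, and then expand $(q^{n+1};q)_\infty$ via Euler's pentagonal-type identity, or more precisely via the $q$-binomial theorem in the form $(x;q)_\infty=\sum_{k\ge 0}\frac{(-1)^k q^{k(k-1)/2}}{(q;q)_k}x^k$ applied with $x=q^{n+1}$. This produces
\begin{equation*}
(q^{n+1};q)_\infty=\sum_{k\ge 0}\frac{(-1)^k q^{k(k-1)/2}q^{(n+1)k}}{(q;q)_k},
\end{equation*}
so that $(q;q)_\infty\frac{f(n)q^n}{(q;q)_n}=\sum_{k\ge 0}\frac{(-1)^k f(n) q^{nk+n+k(k-1)/2+ k}}{(q;q)_k}$. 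After collecting exponents and reindexing $k\mapsto k-1$ (which shifts the range to $k\ge 1$ and turns $(q;q)_k$ into $(q;q)_{k-1}$, absorbing a sign), the exponent of $q$ should simplify to $nk+\frac{k(k-1)}{2}$ and the overall sign to $(-1)^k$, matching the stated middle sum. I would then justify interchanging the order of summation over $n$ and $k$ inside the unit disk by absolute convergence, so that the $n$-sum and $k$-sum can be taken in either order.

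The main obstacle will be the uniformity needed to pass the limit $q\to 1$ through the rearranged double sum: the individual equality of the $q$-series for fixed $|q|<1$ is routine, but the corollary asserts that the \emph{limits} of all three expressions coincide, and a priori rearranging a double series does not commute with taking $q\to 1$ in a Stolz sector. The cleanest fix is to observe that the middle double sum equals $(q;q)_\infty\sum_{n\ge1}\frac{f(n)q^n}{(q;q)_n}$ identically for every $q$ with $|q|<1$ (by the absolutely convergent rearrangement above), so the two limits are literally the limit of the same function of $q$, and there is nothing further to prove once Theorem~\ref{thm1} gives the value $L$ for the left-hand expression. I would make sure to state the exponent bookkeeping carefully — checking the $k(k-1)/2$ versus $k(k+1)/2$ shift — since that is the one place a sign or index slip would be easy to make.
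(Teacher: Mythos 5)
Your proposal is correct, and its logical skeleton is the same as the paper's: all three expressions are identically equal as functions of $q$ on $|q|<1$, so the limits are limits of a single function, whose value $L$ comes from Theorem \ref{thm1} (equivalently, from the asymptotic \eqref{qasymp}). The one genuine difference is how the identity between the double sum and $(q;q)_\infty\sum_{n\ge1}f(n)q^n/(q;q)_n$ is established. The paper builds the double sum term-by-term as a generating function for partitions $\gamma$ containing every natural number below $\operatorname{lg}(\gamma)$ as a part, conjugates to distinct-part partitions, and passes through the intermediate expression $-\sum_\lambda \mu_{\mathcal P}(\lambda)f(\operatorname{sm}(\lambda))q^{|\lambda|}=\sum_n f(n)q^n(q^{n+1};q)_\infty$ — which is exactly what yields Corollary \ref{cor1} as a byproduct. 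You instead write $(q;q)_\infty/(q;q)_n=(q^{n+1};q)_\infty$ and expand by Euler's identity $(x;q)_\infty=\sum_{k\ge0}(-1)^kq^{k(k-1)/2}x^k/(q;q)_k$ at $x=q^{n+1}$; your exponent bookkeeping ($n+(n+1)k+k(k-1)/2 \mapsto nj+j(j-1)/2$ under $j=k+1$, with the sign flip absorbed into the leading minus) checks out. Your route is more direct for this particular corollary and makes the ``same function of $q$'' point explicit, at the cost of not producing the $\mu_{\mathcal P}$ formulation along the way; the paper's combinatorial route proves Theorem \ref{thm1} and both of its corollaries in one chain. Either way the rearrangement of the double series is justified by absolute convergence for fixed $|q|<1$ (note $f(n)=o(n)$ follows from the existence of $L$), as you indicate.
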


\begin{remark}
Setting $f(n)$ equal to the indicator function for $S\subseteq \mathbb N$ in Theorem \ref{thm1} and Corollaries \ref{cor1} and \ref{cor1.5}, re-proves the main arithmetic density results of \cite{Paper1, Paper2} for the case $\zeta=1$.
\end{remark}

\begin{remark}
That the left-hand limit in Corollary \ref{cor1.5} is equal to $f_{\operatorname{avg}}$ confirms the conjecture the author made just below Example E.1.1 in \cite{SchneiderPhD} for the case $\zeta=1$.
\end{remark}

\begin{example}
Set $f(n)=\frac{\varphi(n)}{n}$ in Corollary \ref{cor1.5} with $\varphi(n)$ the Euler phi function; it is a well-known result that $\frac{1}{N}\sum_{k=1}^{N}f(k) \sim 6/\pi^2$ as $N\to \infty$. Noting $f(n)$ is $q$-summable of type (Q, 2) (see remark below), then 
$$\lim_{q\to 1}\ (q;q)_{\infty}\sum_{n\geq 1}\frac{\varphi(n)q^n}{n\cdot (q;q)_{n}} \  =\  \frac{6}{\pi^2}.
$$
\end{example}

\begin{remark}
This re-proves Example E.1.1 of \cite{SchneiderPhD} for the case $\zeta=1$. 
\end{remark}

\begin{remark}
One anticipates similar limiting formulas hold as $q$ approaches other roots of unity.
\end{remark}

%
%
%

\section{Proofs of results}\label{Sect2}

The proofs in this section begin with a rewriting of equation \eqref{defeq} from Definition \eqref{def}: 
\begin{equation}\label{qasymp}
\sum_{n\geq 1}f(n)q^n \  =\   f_{\operatorname{avg}}\cdot \frac{q}{1-q}\  +\   \frac{\varepsilon_f(q)q}{1-q}, 
\end{equation}
with $f_{\operatorname{avg}}:=\lim_{N\to \infty}\  \frac{1}{N}\sum_{k=1}^{N}f(k)$, as above, and $\varepsilon_f(q)$ as defined by  \eqref{defeq}.

We use equation \eqref{qasymp}  as a building block to produce further $q$-series formulas to compute the limit $f_{\operatorname{avg}}$. 
The proof of Theorem \ref{thm2} below generalizes the proof of Theorem 3.6 in \cite{Paper2}.

\begin{proof}[Proof of Theorem \ref{thm2}]
Take $q\mapsto q^k$ in \eqref{qasymp}. Multiply both sides by $(q;q)_{k-1}^{-1}$, sum over $k\geq 1$, then swap order of summation of the double summation, and make the change of indices $k\mapsto k+1$ on the left, to give 
\begin{flalign}\label{asymp3}
\sum_{n\geq 1}\sum_{k\geq 1}\frac{f(n)q^{nk}}{(q;q)_{k-1}}&=\sum_{n\geq 1}f(n)q^n\sum_{k\geq 0}\frac{q^{nk}}{(q;q)_{k}}\\ \nonumber &=\  f_{\operatorname{avg}} \cdot \sum_{k\geq 1}\frac{q^k}{(q;q)_k}\  +\  \sum_{k\geq 1}\frac{\varepsilon_f(q^k)q^{k}}{(q;q)_{k}}\  \sim\  f_{\operatorname{avg}} \cdot \sum_{k\geq 1}\frac{q^k}{(q;q)_k}
\end{flalign}
as $q\to 1$. We note that both the asymptotic, and the order-of-summation swap by absolute convergence, are justified by the hypothesis that $f$ is $q$-summable of type (Q, 1) (see \eqref{Q1def}). 

By the $q$-binomial theorem \cite{And}, the inner sum over $k\geq 0$ in the second double series is equal to $(q^n;q)_{\infty}^{-1}$, and the summation on the right is $(q;q)_{\infty}^{-1}-1$. Multiplying both sides of \eqref{asymp3} by $(q;q)_{\infty}$ gives, from standard generating function arguments, 
as $q\to 1$:
 \begin{flalign}\label{asymp4}
(q;q)_{\infty}\sum_{\lambda \in \mathcal P} f\left(\operatorname{sm}(\lambda)\right)q^{|\lambda|}=(q;q)_{\infty}\sum_{n\geq 1} \frac{f(n)q^n}{(q^{n};q)_{\infty}}\ &=\  \sum_{n\geq 1}f(n)q^n(q;q)_{n-1}\\ \nonumber &=\  -\sum_{\lambda\in \mathcal P}\mu_{\mathcal P}(\lambda) f\left(\operatorname{lg}(\lambda)\right)q^{|\lambda|}\  \sim \  f_{\operatorname{avg}}.\end{flalign}
That the left-hand side is asymptotically equal to $f_{\operatorname{avg}}$ as $q\to 1$, is equivalent to the statement of the theorem. \end{proof}

\begin{proof}[Proof of Corollaries \ref{cor2} and \ref{cor2.5}]
These corollaries record alternative expressions for the limit $f_{\operatorname{avg}}$ derived during the proof of Theorem \ref{thm2} above. \end{proof}

The following proof of Theorem \ref{thm1} generalizes the proof Theorem 3.5 in \cite{Paper2}. 

\begin{proof}[Proof of Theorem \ref{thm1}]
Take $q\mapsto q^k$ in \eqref{qasymp}.  Multiply through by $(-1)^{k} q^{\frac{k(k-1)}{2}}(q;q)_{k-1}^{-1}$ and sum both sides over $k\geq 1$, swapping order of summation on the left-hand side, to give
\begin{equation}\label{qleft}
\sum_{n \geq 1} \sum_{k\geq 1} \frac{(-1)^{k} f(n) q^{nk+\frac{k(k-1)}{2}}}{(q;q)_{k-1}}.
\end{equation}
For each $k\geq 1$, the factor $(q;q)_{k-1}^{-1}$ generates partitions with largest part strictly $<k$. The factor $q^{nk}$ adjoins a largest part $k$ with multiplicity $n$ to each partition, for every $n\geq 1$. The $q^{k(k-1)/2}=q^{1+2+3+...+(k-1)}$ factor guarantees at least one part of each size $<k$. Thus \eqref{qleft} is the generating function for partitions $\gamma$ with every natural number $<\operatorname{lg}(\gamma)$ appearing as a part, weighted by $(-1)^{\operatorname{lg}(\gamma)}f\left(m_{\operatorname{lg}}(\gamma) \right)=(-1)^k f(n)$, where $m_{\operatorname{lg}}(\gamma)=n$ denotes the multiplicity of the largest part of each partition $\gamma$.

Under conjugation, 
this set of partitions $\gamma$ maps to partitions $\lambda$ into distinct parts weighted by $\mu_{\mathcal P}(\lambda)f\left(\operatorname{sm}(\lambda)\right)=(-1)^{\ell(\lambda)}f\left(\operatorname{sm}(\lambda)\right)=(-1)^k f(n)$, which is nonzero since $\lambda$ has no repeated part. 
Thus, noting $f(0):=0$, and multiplying through by a factor of $-1$ to produce the desired end result, we have 
\begin{flalign}\label{nexttolast}
-\sum_{n\geq 1}\sum_{k\geq 1}\frac{(-1)^k f(n) q^{nk+\frac{k(k-1)}{2}}}{(q;q)_{k-1}}\  &=\  -\sum_{\lambda \in \mathcal P} \mu_{\mathcal P}(\lambda)f\left(\operatorname{sm}(\lambda)\right)q^{|\lambda|}=\  \sum_{n\geq 1} f(n)q^n(q^{n+1};q)_{\infty}\  \\ \nonumber &= \  (q;q)_{\infty}\sum_{n\geq 1}\frac{f(n)q^n}{(q;q)_n}\  =\  (q;q)_{\infty}\sum_{\lambda \in \mathcal P} f\left(\operatorname{lg}(\lambda)\right)q^{|\lambda|},
\end{flalign}
using standard partition generating function methods. Manipulating the right-hand side of \eqref{qasymp} accordingly gives, as $q\to 1$, 
\begin{flalign}\label{finalstep}
-\sum_{n\geq 1}\sum_{k\geq 1}\frac{(-1)^k f(n)  q^{nk+\frac{k(k-1)}{2}}}{(q;q)_{k-1}}\  &=\   -f_{\operatorname{avg}}\cdot \sum_{k\geq 1}\frac{(-1)^k q^{\frac{k(k+1)}{2}}}{(q;q)_{k}}\  +\  \sum_{k\geq 1}\frac{(-1)^{k+1}\varepsilon_f(q^k)q^{\frac{k(k+1)}{2}}}{(q;q)_{k}}\\  \nonumber &\sim \   -f_{\operatorname{avg}}\cdot \sum_{k\geq 1}\frac{(-1)^k q^{\frac{k(k+1)}{2}}}{(q;q)_{k}}\  =\  -f_{\operatorname{avg}}\cdot \left((q;q)_{\infty}-1 \right).\end{flalign}
The asymptotic and the order-of-summation swaps (by absolute convergence) are justified by the hypothesis that $f$ is $q$-summable of type (Q, 2) (see \eqref{Q2def}); and we use an identity of Euler for the final equality (see \cite{And}), noting the right-hand side approaches $f_{\operatorname{avg}}$ as $q\to 1$. Comparing the right-hand sides of \eqref{nexttolast} and \eqref{finalstep} as $q\to 1$ completes the proof. \end{proof}

\begin{proof}[Proof of Corollaries \ref{cor1} and \ref{cor1.5}]
These two corollaries record alternative expressions for the limit $f_{\operatorname{avg}}$ derived during the proof of Theorem \ref{thm1} above. 
\end{proof}

%
%
%
%
%
%
The identity \eqref{qasymp} could be involved in series manipulations in diverse ways. 
%
Using techniques from $q$-series, modular forms, Lambert series, et al., it seems likely one can produce other classes of limit formulas analogous to Proposition \ref{Frob}. 

\section*{Acknowledgments}
The author is indebted to George Andrews and Jeffrey Lagarias for discussions about analysis that influenced this paper, and to J. Lagarias for offering useful revisions; to Matthew R. Just, Ken Ono, Paul Pollack, A. V. Sills and Ian Wagner for conversations that advanced my work; and to the anonymous referee for suggestions that strengthened the final draft. %

\end{document}